\newcommand{\zz}{\mathbb{Z}}
\newcommand{\nn}{\mathbb{N}}
\newcommand{\pp}{\mathbb{P}}
\newcommand{\rr}{\mathbb{R}}
\newcommand{\cc}{\mathbb{C}}
\newcommand{\A}{\mathcal{A}}
\newcommand{\B}{\mathcal{B}}
\newcommand{\C}{\mathcal{C}}
\newcommand{\D}{\mathcal{D}}
\newcommand{\M}{\mathcal{M}}
\newcommand{\sqbracket}[1]{\left[#1\right]}
\newcommand{\ti}{\tilde{i}}
\newcommand{\tj}{\tilde{j}}
\newcommand{\tk}{\tilde{k}}
\newcommand{\tbeta}{\tilde{\beta}}
\newcommand{\m}{\mathbf{m}}
\newcommand{\tfp}{\B\times_{\A}\C}
\newcommand{\xx}{\mathbf{x}}
\newcommand{\yy}{\mathbf{y}}
\newcommand{\fata}{\mathbf{a}}
\newcommand{\bb}{\mathbf{b}}
\newcommand{\cea}{\mathbf{c}}
\newcommand{\tea}{\mathbf{t}}
\newcommand{\fatp}{\mathbf{p}}
\newcommand{\fatq}{\mathbf{q}}
\DeclareMathOperator{\conv}{Conv}
\DeclareMathOperator{\relint}{Relint}
\newcommand{\minus}{\scalebox{0.75}[1.0]{$-$}}
\newtheorem{theorem}{Theorem}[section]
\newtheorem{lemma}[theorem]{Lemma}
\newtheorem{proposition}[theorem]{Proposition}
\newtheorem{corollary}[theorem]{Corollary}
\theoremstyle{definition}
\newtheorem{definition}[theorem]{Definition}
\newtheorem{example}[theorem]{Example}
\theoremstyle{remark}
\newtheorem{remark}[theorem]{Remark}
\numberwithin{equation}{section}
\begin{document}

\title{Toric Fiber Products in Geometric Modeling}

\author{Eliana Duarte}

\author{Benjamin Hollering}

\author{Maximilian Wiesmann}

\subjclass[2020]{62R01, 52B20, 13P25, 14M25}

\keywords{Toric variety, Exponential family, Blending function, Maximum Likelihood Estimation, Linear Precision, Toric Fiber Product, Horn Parametrization}

\begin{abstract}
An important challenge in Geometric Modeling is to classify polytopes with rational linear precision. Equivalently, in
Algebraic Statistics one is interested in classifying scaled toric varieties, also known as discrete exponential families, for which the maximum likelihood estimator  can be written in closed form as a rational function of the data (rational MLE). The toric fiber product (TFP) of statistical models is an operation to iteratively construct new models with rational MLE from lower dimensional ones. In this paper we introduce TFPs to the Geometric Modeling setting to construct polytopes with rational linear precision and give explicit formulae for their blending functions. A special case of the TFP is taking the Cartesian product of two polytopes and their blending functions.
 The Horn matrix of a statistical model with rational MLE is a key player in both Geometric Modeling and Algebraic Statistics; it proved to be fruitful providing a characterization of those polytopes having the more restrictive property of strict linear precision. We give an explicit description of the Horn matrix of a TFP.
\end{abstract}

\maketitle

\section{Introduction}
A discrete statistical model with $m$ outcomes is a subset $\M$ of the open probability simplex $\Delta_{m-1}^{\circ}=\{(p_1,\ldots, p_m): p_i> 0, \sum p_i=1\}$. 
Each point in $\Delta_{m-1}^\circ$ specifies a probability 
distribution for a random variable $X$ with outcome space
$[m]:=\{1,\ldots,m\}$ by setting $p_i=P(X=i)$. Given an i.i.d.\ 
 sample $\D=\{X_1,\ldots,X_{N}\}$ of $X$,
let $u_i$ be the number of times the outcome
$i$ appears in $\D$ and set $u=(u_1,\ldots,u_m)$.
The maximum likelihood estimator of the model
$\M$ is the function $\Phi:\nn^{m}\to \M$ that assigns
to $u$ the point in $\M$ that maximizes the log-likelihood
function $\ell(u|p):=\sum_{i}u_i\log(p_i)$.
For  discrete regular exponential families,
the log-likelihood function is concave, and under
certain genericity conditions on $u\in \nn^{m}$, existence and uniqueness of the maximum likelihood estimate $\Phi(u)$ is guaranteed \cite{haberman74}. This does not mean that the MLE is given in closed form  but rather that it can be computed using iterative proportional scaling \cite{Darroch-Ratcliff:1972}.

In Algebraic Statistics, discrete exponential families
are studied from an algebro-geometric perspective
using the fact that the Zariski closure of  any such family is a scaled projective toric variety, we refer to these as toric varieties from this point forward.  In this setting, the
complexity of  maximum likelihood estimation for a model $\M$, or more generally any algebraic variety, is measured in terms of its maximum likelihood degree (ML degree). The ML degree of $\M$ is the number of critical points 
of the likelihood function over the complex numbers for generic $u$ and it is an invariant of $\M$ \cite{huh-sturmfels-2014}. If a model
has ML degree one it means that the coordinate functions of $\Phi$ are rational functions in $u$, thus
the MLE has a closed form expression which is in fact determined
completely in terms of a Horn matrix as explained in \cite{Huh14,DMS21}. It is an open problem in Algebraic Statistics to characterize the class of toric varieties with ML degree one and their respective Horn matrices.

The toric fiber product (TFP), introduced by Sullivant \cite{sullivant2007toric}, is an operation that takes
two toric varieties $\M_1,\M_2$ and, using compatibility criteria determined by a multigrading
$\A$, creates
a higher dimensional toric variety $\M_1\times_\A \M_2$. This operation is used to construct a Markov basis for  $\M_1\times_\A \M_2$  by using  Markov bases of $\M_1$ and $\M_2$. Interestingly, the ML degree of a TFP
is the product of the ML degrees of its factors, therefore the TFP
of two models with ML degree one yields  a model with ML degree one \cite{amendola2020maximum}. 
The Cartesian product of two statistical models is an instance of a TFP. Another  example is the class of decomposable graphical models, each of these models has ML degree one and can be constructed iteratively from lower dimensional ones using TFPs \cite{sullivant2007toric,L96}.

In Geometric Modeling, it is an open problem to classify 
polytopes in dimension $d\geq 3$ having rational linear precision \cite{clarke2020moment}. Remarkably, a polytope has rational linear precision
if and only if its corresponding toric variety has ML degree one \cite{garcia-puente2010}. Inspired by Algebraic Statistics, it is our goal in this article to
introduce the toric fiber product construction to
 Geometric Modeling. In statistics, the interest is
in the closed form expression for the MLE; in Geometric Modeling,
the  interest is in explicitly writing blending functions defined on the polytope that satisfy the property of linear precision. Our main Theorem~\ref{thm} gives an explicit formula for
the blending functions defined on the toric fiber product of two polytopes that have rational linear precision.

For certain toric varieties with ML degree one,  geometric
information about their associated polytopes determines a Horn matrix for the model.
Instances of this phenomena are present in the characterization of polytopes with the more restrictive property of strict linear precision \cite{clarke2020moment}, and in the classification of 2D toric models with ML degree one \cite{familiesEliana}.
With the aim to facilitate the study of these ideas in future work, 
we provide, in Section~\ref{sec:Horn}, an explicit construction of 
a Horn matrix for the  toric fiber product of two toric varieties with ML degree one. This construction
reformulates \cite[Thm.\ 5.5]{amendola2020maximum} in terms of Horn matrices.

\section{Preliminaries}
In this section we provide background on blending functions, rational linear precision, scaled projective toric varieties and toric fiber products. For a friendly introduction to Algebraic Statistics, we refer the reader to the book by Sullivant \cite{S19},  in particular to Chapter 7 on maximum likelihood estimation. To the readers looking for more background on toric geometry we recommend the book by Cox, Little and Schenck \cite{cls11}.

\subsection{Blending Functions} \label{sec:bledningfunctions}
Let $P\subset \rr^d$ be a lattice polytope with facet representation
$P=\{\fatp\in \rr^d:
\langle \fatp,n_i \rangle \geq a_i, \forall i\in [R]\}$,  where $n_i$ is a primitive inward facing normal vector to the facet $F_i$. Without loss of generality, we will always assume that $P$ is full-dimensional inside $\rr^d$. The lattice distance of a point $\fatp\in \rr^d$ to 
$F_i$  is
$h_{i}(\fatp):=\langle \fatp,n_i\rangle+a_i$,   $i \in [R]$.
Set $\B:=P\cap \zz^d$, so $\B$ is the set of lattice points in $P$ and let $w=(w_{\bb})_{\bb\in \B}$ be a vector of positive weights. To each $\bb\in \B$ we associate the rational functions
 $\beta_{\mathbf{b}},\beta_{w},\beta_{w,\bb}:P\to \rr$ defined by
 \begin{align}
    \beta_{\mathbf{b}}(\fatp)&:=\prod_{i=1}^{R} h_{i}(\fatp)^{h_{i}(\mathbf{b})}, &\beta_{w}(\fatp)&:=\sum_{\bb\in\B}w_{\bb}\beta_{\bb}(\fatp), \text{ and}  &\beta_{w,\mathbf{b}}&:=w_{\mathbf{b}}\beta_{\bb}/\beta_{w} .
 \end{align}
The functions $\beta_{w,\bb}$, $\bb\in \B$, are the \emph{toric blending functions} of the pair $(P,w)$, introduced by Krasauskas \cite{krasauskas2002} as generalizations of B\'ezier curves and surfaces to more general polytopes. Blending functions usually satisfy additional properties that make them amenable for computation, see for instance \cite{krasauskas2002}.  Given a set of control points $\{Q_{\bb}\}_{\bb\in \B}$, a \emph{toric patch} is defined by the rule $F(\fatp):=\sum_{\bb\in \B}\beta_{\bb}(\fatp)Q_{\bb}$.

The \emph{scaled 
projective toric variety} $X_{\B,w}$ is the Zariski closure
of the image of the map $(\cc^*)^{d}\to \pp^{|\B|-1}$ defined 
by $\tea \mapsto [w_{\bb}\tea^\bb]_{\bb\in \B}$.  Here $\tea=(t_1,\ldots, t_d), \bb=(b_1,\ldots,b_d)$ and $\tea^\bb=\prod_{i\in[d]}t_i^{b_i}$. The image 
of $X_{\B,w}$ under the map $\pp^{|\B|-1}\to \cc^{|\B|}$, $[x_1:\cdots:x_{|\B|}]\mapsto
\frac{1}{x_1+\cdots+x_{|\B|}}(x_1,\cdots,x_{|\B|})$ intersected
with the positive orthant defines a discrete regular exponential 
family $\M_{\B,w}$ inside $\Delta_{|\B|-1}^{\circ}$. In the literature these are also called log-linear models. 
In this construction we require that the vector of ones is in the
rowspan of the matrix whose columns are the points in $\B$. If this is not the case, we add the vector of ones to this matrix.

\begin{definition}
\label{def:rationalLinearPrecision}
The pair $(P,w)$ has \emph{rational linear precision} if there is a set of rational functions $\{\hat{\beta}_{\bb}\}_{\bb\in \B}$
on $\cc^d$ satisfying:
\begin{compactenum}
    \item $\sum_{\bb\in \B} \hat{\beta}_{\bb}=1$.
    \item The functions $\{\hat{\beta}_{\bb}\}_{\bb\in \B}$ define a rational parametrization
    \[\hat{\beta}:\cc^d\dashrightarrow X_{\B,w}\subset \pp^{|\B|-1},\quad \hat{\beta}(\tea)=(\hat{\beta}_{\bb}(\tea))_{\bb\in \B}.\]
    \item For every $\fatp\in \relint(P)\subset \cc^d$, $\hat{\beta}_{\bb}(\fatp)$ is defined and is a nonnegative real number.
    \item Linear precision: $\sum_{\bb\in \B}\hat{\beta}_{\bb}(\fatp)\bb=\fatp$ for all $\fatp\in P$.
\end{compactenum}
\end{definition}
The property of rational linear precision does not hold for arbitrary toric
patches but it is desirable because  the blending functions ``provide barycentric coordinates for general 
control point schemes'' \cite{garcia-puente2010}. A deep relation to Algebraic Statistics is provided by the following statement.
\begin{theorem}[\cite{garcia-puente2010}]
\label{thm:mlDegRationalPrecision}
The pair $(P,w)$ has rational linear precision if and only if  $X_{\B,w}$ has ML degree one.
\end{theorem}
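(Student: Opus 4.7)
The plan is to exhibit both properties as equivalent to the existence of a rational algebraic section of the moment map of $X_{\B,w}$. Recall that the moment map $\mu_w:(\rr_{>0})^d\to\relint(P)$ sends $\tea$ to $\sum_{\bb\in\B}\frac{w_\bb\tea^\bb}{\malg(\tea)}\bb$, with $\malg(\tea):=\sum_{\bb\in\B}w_\bb\tea^\bb$, and is a real-analytic diffeomorphism onto $\relint(P)$. First I would compute the score equations for the log-likelihood $\ell(u|p)=\sum_\bb u_\bb\log p_\bb$ pulled back along the parametrization $\tea\mapsto(w_\bb\tea^\bb/\malg(\tea))_\bb$. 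A short computation gives $\partial(\log p_\bb)/\partial t_j=(b_j-\mu_w(\tea)_j)/t_j$, hence $\partial\ell/\partial t_j=0$ for all $j$ reduces to
\[
\sum_{\bb\in\B}\frac{w_\bb\tea^\bb}{\malg(\tea)}\,\bb \;=\; \frac{1}{|u|}\sum_{\bb\in\B}u_\bb\,\bb \;=:\; \bar{u},
\]
i.e., $\mu_w(\tea)=\bar{u}$. Thus the MLE problem is equivalent to inverting the moment map at the sample mean $\bar{u}$, and ML degree one translates to the existence of a rational section of $\mu_w$ with values in $X_{\B,w}$.

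For the direction $(\Leftarrow)$, I would suppose $(P,w)$ has rational linear precision with functions $\{\hat{\beta}_\bb\}_{\bb\in\B}$. Conditions (1)--(3) say that $\hat{\beta}$ is a rational parametrization of $X_{\B,w}$ landing in the positive real part on $\relint(P)$, while (4) says exactly that $\mu_w\circ\hat{\beta}=\mathrm{id}_{\relint(P)}$. Substituting $\fatp=\bar{u}$ into $\hat{\beta}$ and invoking the score-equation reduction, the point $\hat{\beta}(\bar{u})$ is a critical point of $\ell(u|\cdot)$ lying in the model. Since the model is a regular exponential family the log-likelihood is concave, so $\hat{\beta}(\bar{u})$ is the unique maximizer and hence the MLE. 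Being a rational function of $u$, this forces the ML degree to be one.

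For $(\Rightarrow)$, assume $X_{\B,w}$ has ML degree one, so the MLE extends to a rational map $\Phi:\cc^{|\B|}\dashrightarrow X_{\B,w}$. By Fisher--Neyman sufficiency the MLE depends on $u$ only through $\bar{u}=\frac{1}{|u|}\sum_\bb u_\bb\bb$, so $\Phi$ factors as $u\mapsto\bar{u}\mapsto\hat{\beta}(\bar{u})$ for a rational map $\hat{\beta}:\cc^d\dashrightarrow X_{\B,w}$; its coordinates define the $\hat{\beta}_\bb$. Conditions (1)--(2) hold since the MLE produces a probability distribution on $X_{\B,w}$, (3) follows from positivity of the MLE on interior data, and (4) is the score equation $\mu_w(\hat{\beta}(\fatp))=\fatp$ read on $\relint(P)$ and then extended by rationality to all of $\cc^d$.

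The main technical obstacle is the factorization step in $(\Rightarrow)$: one must justify in the algebraic category that the rational MLE factors through the linear projection $u\mapsto\bar{u}$, not merely on real positive data. This can be handled by observing that the score equations depend on $u$ only via $\bar{u}$, so the critical-point locus in $X_{\B,w}$ over generic $u$ is cut out by equations whose coefficients are polynomial in $\bar{u}$; combined with ML degree one, this produces the required rational $\hat{\beta}:\cc^d\dashrightarrow X_{\B,w}$ and the verification of property (4) follows from the identity $\mu_w\circ\hat{\beta}=\mathrm{id}$ on a Zariski-dense subset of $\relint(P)$.
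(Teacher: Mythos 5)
The paper does not prove this statement: it is quoted verbatim from \cite{garcia-puente2010} as a known result, so there is no internal proof to compare against. Your argument reconstructs, essentially correctly, the proof from that reference: Birch's theorem reduces the likelihood equations for a toric model to inverting the moment map at the normalized sufficient statistic $\bar{u}$, rational linear precision (condition 4) says exactly that $\hat{\beta}$ is a rational section of this map, and a rational section of a dominant map out of an irreducible variety forces the generic fiber to be a single point, i.e.\ ML degree one. Two steps deserve a little more care than you give them. First, the identification of the ML degree with the degree of the complexified moment map requires knowing that for generic $u$ all complex critical points of $\ell(u|\cdot)$ on $X_{\B,w}$ lie in the dense torus and that the monomial parametrization $\tea\mapsto(w_\bb\tea^\bb)_\bb$ is generically injective onto $X_{\B,w}$; if $\B$ generates a proper sublattice the count of solutions in $\tea$ overcounts critical points on the variety. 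Second, in the $(\Leftarrow)$ direction, ``the MLE is a rational function of $u$'' does not by itself bound the number of \emph{complex} critical points; the correct argument is the field-theoretic one you implicitly need, namely that a $\cc(\bar{u})$-rational point of the generic fiber of a dominant map from an irreducible variety forces the function field extension to be trivial. With those two points made explicit, the proof is complete and coincides with the approach of the cited source.
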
 

\begin{remark}
  Henceforth, to ease notation, 
  we drop the usage of a vector of weights $w$ for the blending
  functions $\beta_{w,\bb}$ and the scaled projective toric variety $X_{\B,w}$. Although we will not in general write them explicitly in the proofs, the weights
  play an important role in determining whether the toric variety
  has ML degree one or, equivalently, if the polytope has rational linear precision. A deep dive into the
  study of these scalings for toric varieties by using principal $A$-determinants is presented in \cite{amendola2019}.
\end{remark}
\begin{example} \label{ex:tbfs}
    Consider the point configurations  $\B=\{(0,0),(1,0),(0,1),(1,1)\}$, $\C=\{(0,0),(1,0),$ $(2,0),(1,1),(0,1)\}$ and set $P=\conv(\B)$, $Q=\conv(\C)$; these are displayed in Figure~\ref{fig:squareTrapezoidTFP} . The facet presentation of
    $P$ is
    \[
    P=\{(x_1,x_2) \in \rr^2: x_1\geq 0, x_2\geq 0, 1-x_1\geq 0, 1-x_2\geq 0\}.
    \]
    The lattice distance functions of a point $(x_1,x_2)\in \rr^2$
    to the facets of $P$ are
    \[
    h_1=x_1,~h_2=x_2,~h_3=1-x_1,~h_4=1-x_2.
    \]
    Therefore the toric bleding functions of $P $ with weights $w=(1,1,1,1)$
    are:
    \begin{align} \label{tbf:square}
    \beta_{\tiny\begin{pmatrix}0 \\ 0
    \end{pmatrix}}&= (1\minus x_1)(1\minus x_2),   &\!\!\beta_{\tiny\begin{pmatrix}1 \\ 0
    \end{pmatrix}}&=x_2(1\minus x_1),&\!\! \beta_{\tiny\begin{pmatrix}0 \\ 1
    \end{pmatrix}}&=x_1(1\minus x_2), &\!\! \beta_{\tiny\begin{pmatrix}1 \\ 1
    \end{pmatrix}}&=x_1x_2.
    \end{align}
     These toric blending functions satisfy the conditions in Definition~\ref{def:rationalLinearPrecision}; when this is the case, $P$
    is said to have \emph{strict linear precision}.
    The polytope $Q$ has rational linear precision for the vector of weights 
    $w=(1,2,1,1,1)$. In this
    case, the toric blending functions do not satisfy condition $4$ in Definition~\ref{def:rationalLinearPrecision}, however, as explained in
    \cite{clarke2020moment}, the following functions do:
    \begin{align*}
        \tbeta_{\tiny\begin{pmatrix} 0 \\ 0
        \end{pmatrix}} &= \frac{(1\minus y_2)(2\minus y_1\minus y_2)^2}{(2\minus y_2)^2}, 
        & \tbeta_{\tiny\begin{pmatrix} 1 \\ 0
        \end{pmatrix}} &= \frac{2y_1(1\minus y_2)(2\minus y_1\minus y_2)}{(2\minus y_2)^2}, &\tbeta_{\tiny\begin{pmatrix} 2 \\ 0
        \end{pmatrix}} &= \frac{y_1^2(1\minus y_2)}{(2\minus y_2)^2}, \\
        \tbeta_{\tiny\begin{pmatrix} 0 \\ 1
        \end{pmatrix}} &= \frac{y_2(2\minus y_1\minus y_2)}{2\minus y_2}, & \tbeta_{\tiny\begin{pmatrix} 1 \\ 1
        \end{pmatrix}} & = \frac{y_1 y_2}{2\minus y_2}. & &
    \end{align*}
    
\end{example}

\begin{figure}[t]
\centering
\includegraphics[width=\textwidth]{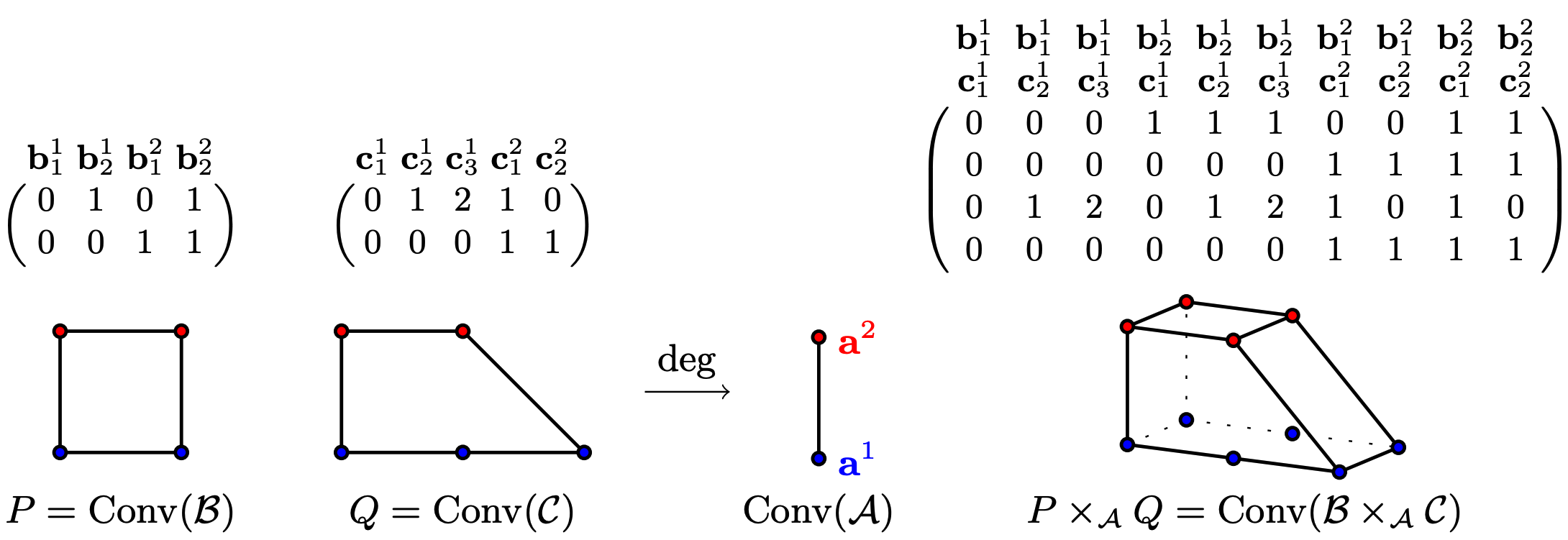}

\caption{Toric fiber product of the point configurations $\B$ and $\C$ in
Example~\ref{ex:squareTrapezoidTFP}. Each point configuration is displayed as a matrix with its corresponding convex hull below.
 The blue vertices in each polytope have degree $e_1$ while the red vertices in each polytope have degree $e_2$ in the associated multigrading $\A$. The degree map is $\deg(\bb_j^i)=\deg(\cea_{k}^i)=\fata^i$.}
\label{fig:squareTrapezoidTFP}
\end{figure}

\subsection{Toric Fiber Products of Point Configurations}
\label{sec:TFP}
Let $r\in\mathbb{N}$ and $s_i, t_i\in \mathbb{N}$ for $1\leq i\leq r$. Fix integral point configurations $\A = \{\fata^i : i\in\sqbracket{r}\}\subseteq \mathbb{Z}^d$, $\B = \{\bb^i_j : i\in\sqbracket{r}, j\in \sqbracket{s_i}\}\subseteq \mathbb{Z}^{d_1}$ and $\C=\{\cea^i_k : i\in\sqbracket{r},k\in\sqbracket{t_i}\}\subseteq \mathbb{Z}^{d_2}$. 
For any point configuration $\mathcal{P}$, we use $\mathcal{P}$  interchangeably to denote a set of points or the matrix whose columns are the points in $\mathcal{P}$; the symbol
 $|\mathcal{P}|$ will be used to denote the indexing set of $\mathcal{P}$.  For each $i\in |\A|$, set $\B^{i}:=\{\bb_{j}^{i}:j\in [s_i]\}$
and $\C^{i}=\{\cea_{k}^{i}:k\in [t_i]\}$. The indices $i,j,k$
are reserved for elements in $|\A|,|\B^i|$ and $|\C^i|$, respectively.

Throughout this paper, we assume linear independence of $\A$ and the existence of a $\overline{\omega}\in\mathbb{Q}^d$ such that $\overline{\omega} \fata^i = 1$ for all $i$; the latter condition ensures that if an ideal is homogeneous with respect to a multigrading in $\A$ it is also homogeneous in the usual sense. Sullivant introduces the TFP as an operation on toric ideals which are multigraded by $\A$; such condition, as explained in \cite{engstrom2014multigraded}, is equivalent to the existence of linear maps $\pi_1:\zz^{d_1}\to \zz^r$ and $\pi_{2}:\zz^{d_2}\to\zz^r$ such that $\pi_{1}(\bb_{j}^i)=\fata^{i}$
for all $i$ and $j$, and $\pi_{2}(\cea_{k}^{i})=\fata^i$ for all $i$ and $k$.
We use $\deg$ to denote the projections $\pi_1,\pi_2$.

The \emph{toric fiber product} of  $\B$ and $\C$  is the point configuration 
$\B\times_{\A}\C$ given by
\[\B\times_{\A}\C = \{(\bb^i_j, \cea^i_k) : i\in|\A|, j\in |\B^{i}|, k\in|\C^{i}|\}.\]
In terms of toric varieties, introduced in Section~\ref{sec:bledningfunctions}, the toric fiber product of $X_{\B}$ and $X_{\C}$ is the toric variety $X_{\B\times_{\A}\C}$ associated to $\B\times_{\A}\C$ which is given in the following way. Let $X_{\B}$ and $X_{\C}$ have coordinates $x_j^i$ and $y_k^i$ respectively. Then $X_\tfp = \phi(X_\B \times X_\C)$ where $\phi$ is the monomial map
\begin{align*}
\phi ~:~ \cc^{|\B|} \times \cc^{|\C|} &\to \cc^{|\tfp|} \\
      (x_j^i, y_k^i) &\mapsto x_j^iy_k^i = z_{jk}^i. 
\end{align*}    
Furthermore, if $w,\tilde{w}$ are weights for $\B,\C$, respectively,
then the vector of weights for $\tfp$ is $w_{\tfp}:=(w_j^i\tilde{w}_{k}^i)^{i\in|\A|}_{(j,k)\in|\B^i\times \C^i|}$.
We end this section with an example illustrating this operation. 
\begin{example}
\label{ex:squareTrapezoidTFP}
Consider the point configurations $\B$ and $\C$ in Example~\ref{ex:tbfs} and let $\A = \{e_1, e_2 \}$ consist of two standard basis vectors. The construction of
a degree map and the corresponding toric fiber product $\B\times_{\A}\C$ is explained in Figure~\ref{fig:squareTrapezoidTFP}.

\end{example}

\section{Blending Functions of Toric Fiber Products}
In this section we show that the blending functions of the toric fiber product of two polytopes with rational linear precision can be constructed from the blending functions of the original polytopes and give an explicit formula for them. 
Throughout this section we use the setup for the toric fiber product introduced in Section~\ref{sec:TFP}. We let $P=\conv(\B)$ and $Q=\conv(\C)$ be polytopes with rational linear precision and denote their blending functions satisfying Definition~\ref{def:rationalLinearPrecision} by $\{\beta_j^i\}_{j \in |\B_i|}^{i \in |\A|}$ and $\{\beta_k^i\}_{k \in |\C_i|}^{i \in |\A|}$, respectively. 

\begin{theorem}
\label{thm}
If $P$ and $Q$ are polytopes with rational linear precision for weights $w,\tilde{w}$, respectively, then the toric fiber product $P \times_\A Q$ has rational linear precision with vector of weights $w_{\tfp}$. Moreover, blending functions with rational linear precision for $P \times_\A Q$ are given by
\begin{equation}
    \label{equ:blendingFct}
    \beta_{j,k}^{i}(\fatp, \fatq) = 
    \frac{\beta_{j}^{i}(\fatp) \beta_{k}^{i}(\fatq)}{\sum_{j' \in |\B^i|} \beta^{i}_{j'}(\fatp)} = \frac{\beta_{j}^{i}(\fatp) \beta_{k}^{i}(\fatq)}{\sum_{k' \in |\C^i|} \beta^{i}_{k'}(\fatq)}
\end{equation}
where $(\fatp, \fatq) \in P \times_\A Q$. 
\end{theorem}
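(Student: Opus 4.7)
The plan is to verify the four conditions of Definition~\ref{def:rationalLinearPrecision} directly for the proposed functions $\beta_{j,k}^i$, which simultaneously establishes rational linear precision for $P\times_\A Q$ and produces the explicit formula. The central observation, which I would establish first as a lemma, is that the two denominators in \eqref{equ:blendingFct} are equal on the fiber product: applying $\pi_1$ to the linear precision relation for $P$ yields
\begin{equation*}
\pi_1(\fatp)=\sum_{i}\left(\sum_j \beta_j^i(\fatp)\right)\fata^i,
\end{equation*}
and similarly $\pi_2(\fatq)=\sum_i(\sum_k \beta_k^i(\fatq))\fata^i$. Since the compatibility $\pi_1(\fatp)=\pi_2(\fatq)$ holds on $P\times_\A Q$ and $\A$ is linearly independent, we obtain the key identity $\sum_{j}\beta_j^i(\fatp)=\sum_k\beta_k^i(\fatq)$ for every $i\in|\A|$.

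Granting this identity, the four conditions fall out cleanly. For condition (1), summing $\beta_{j,k}^i$ over all indices and factoring the inner $i$-summand gives $\sum_i(\sum_j\beta_j^i)(\sum_k\beta_k^i)/(\sum_{j'}\beta_{j'}^i)=\sum_{i,k}\beta_k^i(\fatq)=1$ by linear precision of $Q$. For condition (2), one uses the factored form $\beta_{j,k}^i=\tilde x_j^i\cdot \tilde y_k^i$ with $\tilde x_j^i:=\beta_j^i(\fatp)$ and $\tilde y_k^i:=\beta_k^i(\fatq)/\sum_{j'}\beta_{j'}^i(\fatp)$, so that the parametrization equals the composition of the rational parametrizations of $X_\B$ and $X_\C$ (which exist by hypothesis) with the toric-fiber-product morphism $\phi(x_j^i,y_k^i)=x_j^iy_k^i$, ensuring the image lies in $X_{\B\times_\A\C}$ with the correct scaling $w_\tfp$ inherited from $\phi$. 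Condition (3) is immediate on $\relint(P\times_\A Q)$: each factor $\beta_j^i(\fatp),\beta_k^i(\fatq)$ is nonnegative there, and the common denominator is a positive sum.

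The bulk of the proof is condition (4), the linear precision identity. Splitting $(\bb_j^i,\cea_k^i)$ into its two blocks and fixing $i$, the first-block contribution is
\begin{equation*}
\sum_{j,k}\frac{\beta_j^i(\fatp)\beta_k^i(\fatq)}{\sum_{j'}\beta_{j'}^i(\fatp)}\bb_j^i=\frac{\sum_k\beta_k^i(\fatq)}{\sum_{j'}\beta_{j'}^i(\fatp)}\sum_j\beta_j^i(\fatp)\bb_j^i,
\end{equation*}
and the equal-denominator identity collapses the prefactor to $1$. Summing over $i$ and invoking linear precision of $P$ recovers $\fatp$; the analogous computation using the second form of \eqref{equ:blendingFct} recovers $\fatq$.

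I expect the main obstacle to be making condition (2) fully rigorous: one must argue that the chosen factorization $\beta_{j,k}^i=\tilde x_j^i\tilde y_k^i$ genuinely lifts through $\phi$ to a rational map into the scaled toric variety $X_{\B\times_\A\C}$ with precisely the weights $w_\tfp=(w_j^i\tilde w_k^i)$, rather than some other scaling. This requires tracking how the weight vectors enter the toric parametrizations of $X_\B$ and $X_\C$ provided by rational linear precision and checking that the product $\phi$ respects the multigrading. Apart from this bookkeeping, the argument is a direct consequence of the denominator identity and the linear precision of the factors.
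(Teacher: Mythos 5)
Your proof is correct, and while the overall skeleton (verify the four conditions of Definition~\ref{def:rationalLinearPrecision} directly, using the monomial map $\phi$ for the parametrization condition) matches the paper, your route to the crucial fact that the two expressions in \eqref{equ:blendingFct} agree is genuinely different and more elementary. You derive the identity $\sum_{j}\beta_j^i(\fatp)=\sum_k\beta_k^i(\fatq)$ by pushing the linear precision identities for $P$ and $Q$ through the degree maps $\pi_1,\pi_2$ and invoking linear independence of $\A$; this single identity does the work of the paper's Lemma~\ref{lem:subBlendingFctsSumTo1} (on $P^i$ one has $\pi_1(\fatp)=\fata^i$, so the sum is $1$) and of its Lemma~\ref{lem:fctsAgreeOnMLE} together with an Orbit--Cone/torus-action argument, by which the paper shows $\beta_1$ and $\beta_2$ lie in the same $T_{\A}$-orbit and then checks agreement at one point per orbit. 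What your approach buys is a self-contained, pointwise argument on $P\times_{\A}Q$ that immediately yields conditions (1), (3) and (4) and, by Zariski density of $\relint(P\times_{\A}Q)$, equality as rational functions; what the paper's approach buys is that the first assertion (ML degree one, hence rational linear precision for the weights $w_{\tfp}$) comes for free from the multiplicativity of the ML degree, and that condition (2) is handled cleanly. That last point is exactly the obstacle you flag: the resolution, which you should make explicit, is that your rescaling $\tilde y_k^i=\beta_k^i/\sum_{j'}\beta_{j'}^i$ multiplies all degree-$\fata^i$ coordinates by a common scalar, i.e.\ it is the action of the torus $T_{\A}=(\cc^*)^{|\A|}$ induced by the multigrading, under which $X_{\C}$ and hence $X_{\tfp}=\phi(X_{\B}\times X_{\C})$ are invariant; the weights $w_{\tfp}=(w_j^i\tilde w_k^i)$ are correct because $X_{\tfp,w_{\tfp}}$ is by definition the image under $\phi$ of the scaled varieties parametrized by your $\beta_j^i$ and $\beta_k^i$. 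With that sentence added, your argument is complete.
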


\begin{remark}
\label{rem:defBlendingFct}
   The two expressions on the right hand side of Equation (\ref{equ:blendingFct})
   are well defined on $\relint(P\times_{\A}Q)$. The morphism $\beta_{j,k}^i$ extends to a rational function $\beta^i_{j,k}\colon \cc^{d} \dashrightarrow \cc$ where $d=\dim(P\times_{\A}Q)$. By abuse of notation, we will sometimes write $\beta^i_{j,k}(\tea) = \frac{1}{N^i(\tea)}\beta^i_j(\tea)\beta^i_k(\tea)$ where $\tea\in\cc^d$ and $N^i(\tea)$ denotes the denominator as in (\ref{equ:blendingFct}). 
\end{remark}
The following example illustrates the construction in Theorem~\ref{thm}. 
\begin{example}
\label{ex:squareTrapezoidBlendingFunctions}
Consider the polytopes $P$ and $Q$ from Example~\ref{ex:tbfs},
with their vectors of weights. 
By Theorem \ref{thm},  the blending functions for $P \times_\A Q$ are $\frac{\beta_j^i \tbeta_k^i}{\sum_{j} \beta_j^i }= \frac{\beta_j^i \tbeta_k^i}{\sum_{k}\tbeta_k^i}$. For example, the blending function corresponding to $(\bb^1_2 ~ \cea^1_3)^T$ is
\[\beta^1_{2,3} = \frac{\beta^1_2 \tbeta_3^1}{\beta_1^1 + \beta_1^2} = \frac{x_1(1\minus x_2)y_1^2(1\minus y_2)}{(1\minus x_2)(2-y_2)^2}=\frac{\beta^1_2 \tbeta_3^1}{\tbeta_1^1 + \tbeta_2^1 + \tbeta_3^1} = \frac{x_1(1\minus x_2)y_1^2(1\minus y_2)}{(1\minus y_2)(2-y_2)^2}. \]
Note that while the denominators are not the same, the two expressions above are equal at all points in $\relint(P \times_\A Q)$.  
\end{example}

Before proving Theorem \ref{thm} we will prove two lemmas which will be used in the final proof. Our first lemma demonstrates how the blending functions behave on certain faces of $P$ and $Q$. The second lemma shows that the two parametrizations in Equation (\ref{equ:blendingFct}) yield the same MLE for a generic data point $u$. 

\begin{lemma}
\label{lem:subBlendingFctsSumTo1}
Let $P^i$ be the subpolytope defined by $P^i=\conv\{\bb^i_j : j\in|\B^i|\}$. Then, for $\fatp\in P^i$, we have
\[\sum_{j\in |\B^i|} \beta^i_j(\fatp) = 1.\]
\end{lemma}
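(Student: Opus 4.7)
The plan is to push forward the linear precision identity for $P$ along the ``degree map'' $\pi_1$ and then exploit the standing assumption that the configuration $\A$ is linearly independent; the claim follows without even needing to invoke property (1) of Definition~\ref{def:rationalLinearPrecision}.

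First, I would apply property (4) of Definition~\ref{def:rationalLinearPrecision} to the polytope $P=\conv(\B)$: for every $\fatp\in P$,
\[
\sum_{i'\in|\A|}\,\sum_{j'\in|\B^{i'}|}\beta^{i'}_{j'}(\fatp)\,\bb^{i'}_{j'}\;=\;\fatp.
\]
Applying the linear map $\pi_1\colon\zz^{d_1}\to\zz^r$ to both sides and using the defining relation $\pi_1(\bb^{i'}_{j'})=\fata^{i'}$ of the multigrading, this transforms into the identity
\[
\sum_{i'\in|\A|}\Big(\sum_{j'\in|\B^{i'}|}\beta^{i'}_{j'}(\fatp)\Big)\,\fata^{i'}\;=\;\pi_1(\fatp).
\]

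Next, I would evaluate the right-hand side using the hypothesis $\fatp\in P^i$. Writing $\fatp$ as a convex combination of the $\bb^i_j$'s and applying $\pi_1$ linearly gives $\pi_1(\fatp)=\fata^i$, because every generator of $P^i$ has degree $\fata^i$. Comparing with the display above expresses $\fata^i$ as a linear combination of the vectors in $\A$. The standing assumption that $\A$ is linearly independent (stated at the beginning of Section~\ref{sec:TFP}) then forces the coefficient of $\fata^i$ to equal $1$ and all other coefficients to vanish. The coefficient of $\fata^i$ is precisely $\sum_{j\in|\B^i|}\beta^i_j(\fatp)$, which is the desired identity.

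I do not anticipate any substantive obstacle: the argument is a pure book-keeping consequence of property (4) of rational linear precision, the fact that the degree map collapses each fiber $\B^{i'}$ to a single point of the linearly independent configuration $\A$, and the linearity of $\pi_1$ applied to a convex combination. The only minor subtlety worth confirming is that the blending functions remain well-defined at boundary points $\fatp\in P^i$, but this is automatic since property (4) is asserted on all of $P$ and $P^i\subseteq P$.
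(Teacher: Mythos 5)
Your argument is correct, but it takes a genuinely different route from the paper's. You push the linear precision identity (property (4) of Definition~\ref{def:rationalLinearPrecision}) forward along the degree map $\pi_1$ and then read off the coefficient of $\fata^i$ using the linear independence of $\A$; this is a purely linear-algebraic argument that uses neither the partition-of-unity property (1) nor any toric geometry. The paper argues the other way around: it first shows that $P^i=\deg^{-1}(\fata^i)$ is a face of $P$ (a fact it records separately as a corollary) and then invokes the Orbit--Cone Correspondence to conclude that every individual $\beta^{i'}_{j'}$ with $i'\neq i$ vanishes on $P^i$, so that $\sum_{(i',j')}\beta^{i'}_{j'}=1$ collapses to the claimed identity. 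The heavier machinery is not gratuitous there: the vanishing of the individual off-degree coordinates on $P^i$ (rather than merely of the sums $\sum_{j'}\beta^{i'}_{j'}$, which is what your computation yields as a byproduct) is reused in the proof of Theorem~\ref{thm}, where the Orbit--Cone Correspondence is applied again to the $T_{\A}$-action on $X_{\tfp}$. So your proof is shorter and more elementary for the lemma as stated, while the paper's buys structural facts needed downstream. Both proofs share the same implicit assumption that the rational functions $\beta^i_j$ are actually defined on the face $P^i$ of $P$, which you rightly flag.
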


\begin{proof}
By assumption, $\beta\colon \cc^{d_1}\dashrightarrow X_{\B},~\beta(\tea)=\left(\beta^i_j(\tea)\right)^{i\in|\A|}_{j\in|\B^i|}$ is a 
rational parametrization of $X_{\B}$. Let $X^i_{\B}$ be the toric variety associated to $P^i$; we claim that $X^i_{\B}$ is parametrized by $\left(\beta^i_j(\tea)\right)_{j\in|\B^i|}$ and setting all other coordinates of $\beta$ to zero. Indeed, consider the linear map
\[\deg\colon P\rightarrow \conv(\A),~ \bb^i_j\mapsto \fata^i.\]
As $\A$ is linearly independent, $\fata^i$ is a vertex of $\conv(\A)$. Note that $P^i=\deg^{-1}(\fata^i)$; as preimages of faces under linear maps are again faces, $P^i$ is a face of $P$. The claim then follows from the Orbit-Cone Correspondence \cite[Thm.\ 3.2.6]{cls11}. We know that $\sum_{(i,j)\in |\B|}\beta^i_j = 1$. On $P^i$, all $\beta^{i^{\prime}}_j$ for $i^{\prime}\neq i$ vanish, so we must have $\sum_{j\in|\B^i|}\beta^i_j(\fatp)=1$ for $\fatp\in P^i$.
\end{proof}

We record the following fact as a consequence from the proof above.
\begin{corollary} Let $P$ be a polytope equipped with a linearly independent multigrading $\A$. Then $P^i = \conv(\bb_j^i ~|~ j \in |\B^i|)$ is a face of $P$. 
\end{corollary}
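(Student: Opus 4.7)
The plan is to simply extract and package the geometric content already present in the proof of Lemma~3.3. Concretely, the multigrading $\A$ corresponds, by hypothesis, to a linear map $\deg\colon\zz^{d_1}\to\zz^r$ with $\deg(\bb^i_j)=\fata^i$; I would extend this to an affine map $\deg\colon\rr^{d_1}\to\rr^r$ and note that $\deg(P)\subseteq\conv(\A)$ because $P=\conv(\B)$ and $\deg$ sends vertices of $P$ into $\A$.

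Next I would argue that each $\fata^i$ is a vertex of $\conv(\A)$. Linear independence of the $\fata^i$ together with the hypothesis that some $\overline{\omega}\in\mathbb{Q}^d$ satisfies $\overline{\omega}\,\fata^i=1$ for all $i$ forces the $\fata^i$ to be affinely independent, since they all lie on the affine hyperplane $\{x:\overline{\omega}\,x=1\}$ which misses the origin. Hence $\conv(\A)$ is an $(r-1)$-simplex with vertices $\fata^1,\ldots,\fata^r$, and in particular $\{\fata^i\}$ is a face of $\conv(\A)$.

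Then I would invoke the standard fact that the preimage of a face of the target under an affine map restricted to a polytope is a face of the source: the set $F_i:=\deg^{-1}(\fata^i)\cap P$ is a face of $P$. It remains to identify $F_i$ with $P^i$. Writing an arbitrary $\fatp\in P$ as $\fatp=\sum_{i',j}\lambda_{i'j}\bb^{i'}_j$ with $\lambda_{i'j}\geq 0$ and $\sum_{i',j}\lambda_{i'j}=1$, we obtain
\[
\deg(\fatp)=\sum_{i'}\Bigl(\sum_j\lambda_{i'j}\Bigr)\fata^{i'},
\]
and by affine independence of the $\fata^{i'}$ the condition $\deg(\fatp)=\fata^i$ forces $\sum_j\lambda_{ij}=1$ and $\lambda_{i'j}=0$ for $i'\neq i$. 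Thus $\fatp\in\conv\{\bb^i_j:j\in|\B^i|\}=P^i$, giving $F_i=P^i$ as required.

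The argument is essentially bookkeeping; the only point requiring a moment of care is deducing affine independence of $\A$ from linear independence plus the $\overline{\omega}$ hypothesis, so that each singleton $\{\fata^i\}$ genuinely is a face of $\conv(\A)$ rather than merely an interior point. Everything else is the standard face-pullback principle for affine maps between polytopes.
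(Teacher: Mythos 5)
Your proof is correct and follows essentially the same route as the paper: the paper likewise pulls back the vertex $\fata^i$ of $\conv(\A)$ along the degree map and invokes the fact that preimages of faces under linear maps are faces. You simply make explicit two steps the paper leaves implicit, namely that linear independence of $\A$ gives affine independence (so $\fata^i$ really is a vertex) and the barycentric-coordinate verification that $\deg^{-1}(\fata^i)\cap P$ equals $P^i$.
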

\begin{example}
For the polytope $Q$ in Figure~\ref{fig:squareTrapezoidTFP}, we have
$Q^{1}=\conv(\cea_{1}^{1},\cea_{2}^{1},\cea_{3}^{1})$ and
$Q^2=\conv(\cea_1^2,\cea_2^2)$. The projection $\deg$ is illustrated in Figure~\ref{fig:squareTrapezoidTFP}.
To illustrate the result of Lemma~\ref{lem:subBlendingFctsSumTo1}, note that the sum of the blending
functions associated to the lattice points in $Q^1$ is equal to $1-y_2$.
\end{example}

\begin{lemma}
\label{lem:fctsAgreeOnMLE}
Let $P$ and $Q$ be polytopes with rational linear precision and  $\beta_1,\beta_2$ be two rational functions defined by
\[\beta_1(\tea)=\left(\frac{\beta_{j}^{i}(\tea) \beta_{k}^{i}(\tea)}{\sum_{j' \in |\B^i|} \beta^{i}_{j'}(\tea)}\right)^{i\in|\A|}_{(j,k)\in|\B^i\times \C^i|}
\text{, } \beta_2(\tea) = \left(\frac{\beta_{j}^{i}(\tea) \beta_{k}^{i}(\tea)}{\sum_{k' \in |\C^i|} \beta^{i}_{k'}(\tea)}\right)^{i\in|\A|}_{(j,k)\in|\B^i\times \C^i|}.\] 
For $u=\left(u^i_{j,k}\right)^{i\in|\A|}_{(j,k)\in|\B^i\times \C^i|}$, set
$\fatp = \sum_{(i,j,k)\in|\tfp|} \frac{u^i_{j,k}}{u^+_{+,+}}\m^i_{j,k}\in \cc^d.$
Then the maximum likelihood estimate for $X_{\tfp}$ is 
\[\beta_1(\fatp) = \beta_2(\fatp) = \left(\hat{p}^{i}_{j,k}\right)^{i\in|\A|}_{(j,k)\in|\B^i\times \C^i|}~.\] 
\end{lemma}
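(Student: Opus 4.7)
The plan is to apply Birch's theorem: the MLE of $X_\tfp$ given data $u$ is the unique point in $X_\tfp \cap \Delta^{\circ}$ whose sufficient statistic equals $\sum_{i,j,k}(u^i_{j,k}/u^+_{+,+})\m^i_{j,k} = \fatp$. First I would decompose $\fatp = (\fatp_1, \fatp_2) \in \cc^{d_1} \times \cc^{d_2}$ with $\fatp_1 = \sum_{i,j}(u^i_{j,+}/u^+_{+,+})\bb^i_j$ and $\fatp_2 = \sum_{i,k}(u^i_{+,k}/u^+_{+,+})\cea^i_k$, exploiting $\m^i_{j,k} = (\bb^i_j, \cea^i_k)$. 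By rational linear precision of $P$, the point $(\beta^i_j(\fatp_1))_{i,j} \in X_\B \cap \Delta^{\circ}$ satisfies $\sum_{i,j}\beta^i_j(\fatp_1)\bb^i_j = \fatp_1$ and hence (by Birch) is the MLE of $X_\B$ for the marginal data $(u^i_{j,+})$; the symmetric statement holds for $X_\C$.

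To establish $\beta_1(\fatp) = \beta_2(\fatp)$, it suffices to show the two denominators agree at $\fatp$. Applying $\deg = \pi_1$ to the linear precision identity of $P$ yields $\sum_{i,j}\beta^i_j(\fatp_1)\fata^i = \pi_1(\fatp_1) = \sum_i (u^i_{+,+}/u^+_{+,+})\fata^i$. Linear independence of $\A$ then forces
\[ \sum_{j\in|\B^i|}\beta^i_j(\fatp_1) \;=\; u^i_{+,+}/u^+_{+,+} \;=:\; M^i \]
for each $i$, and symmetrically $\sum_{k\in|\C^i|}\beta^i_k(\fatp_2) = M^i$. Both denominators equal $M^i$ at $\fatp$, so $\beta_1(\fatp) = \beta_2(\fatp)$; denote this common point by $q$ with $q^i_{j,k} = \beta^i_j(\fatp_1)\beta^i_k(\fatp_2)/M^i$.

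It remains to verify that $q$ equals $\hat{p}$. The simplex condition $\sum_{i,j,k}q^i_{j,k} = \sum_i M^i = 1$ is immediate from the definition of $M^i$. The sufficient statistic equation follows from the factorization
\[\sum_{i,j,k}q^i_{j,k}\bb^i_j = \sum_i\frac{1}{M^i}\left(\sum_j\beta^i_j(\fatp_1)\bb^i_j\right)\left(\sum_k\beta^i_k(\fatp_2)\right) = \sum_{i,j}\beta^i_j(\fatp_1)\bb^i_j = \fatp_1,\]
and symmetrically $\sum q^i_{j,k}\cea^i_k = \fatp_2$. The main obstacle is showing $q \in X_\tfp = \phi(X_\B \times X_\C)$: since the toric ideal of $X_\C$ is $\A$-multigraded, every defining binomial has matching $\A$-degree on both sides, so the class-wise rescaling $y^i_k \mapsto y^i_k/M^i$ multiplies each such binomial by a single global scalar and therefore preserves $X_\C$. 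Hence $(\beta^i_k(\fatp_2)/M^i) \in X_\C$, and pairing with $(\beta^i_j(\fatp_1)) \in X_\B$ under $\phi$ produces $q \in X_\tfp$. Birch's theorem then concludes $q = \hat{p}$.
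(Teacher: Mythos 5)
Your proof is correct, but it takes a genuinely more self-contained route than the paper's. The paper's argument is essentially a chain of citations: it invokes \cite[Prop.\ 8.4]{clarke2020moment} to identify $\beta^i_j(\fatp)$ and $\beta^i_k(\fatp)$ with the marginal MLEs $(\hat p_{\B})^i_j$ and $(\hat p_{\C})^i_k$, then \cite[Thm.\ 5.5]{amendola2020maximum} for the closed form $\hat p^i_{j,k}=(\hat p_{\B})^i_j(\hat p_{\C})^i_k/(\hat p_{\A})^i$, and finally the proof of \cite[Lem.\ 5.10]{amendola2020maximum} for the identity $(\hat p_{\B})^i_+=(\hat p_{\C})^i_+=u^i_{+,+}/u^+_{+,+}$, which is what makes the two denominators agree. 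You instead exhibit the candidate point $q$ and verify the Birch conditions from scratch: simplex membership, matching sufficient statistics, and membership in $X_{\tfp}$ via the class-wise rescaling $y^i_k\mapsto y^i_k/M^i$, which is exactly the $T_{\A}$-action that the paper only deploys later in the proof of Theorem~\ref{thm}. Your derivation of $\sum_{j}\beta^i_j(\fatp_1)=u^i_{+,+}/u^+_{+,+}$ by applying $\pi_1$ to the linear precision identity and using linear independence of $\A$ is precisely the mechanism behind the cited Lemma 5.10, so that step matches in substance. What the paper's version buys is brevity; what yours buys is a self-contained argument that in effect re-proves the ML-degree-one case of the TFP MLE formula rather than quoting it. The one caveat, shared with the paper, is that invoking Birch's theorem requires $q$ to lie in the open simplex and the MLE to exist there, whereas condition (3) of Definition~\ref{def:rationalLinearPrecision} only guarantees nonnegativity; strictly speaking a genericity hypothesis on $u$ should be stated.
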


\begin{proof}
As $P$ and $Q$ have rational linear precision, by \cite[Prop.\ 8.4]{clarke2020moment} we have $\beta^i_j(\fatp) = \left(\hat{p}_{\B}\right)^i_{j}$ and $\beta^i_k(\fatp) = \left(\hat{p}_{\C}\right)^i_{k}$. Furthermore, by \cite[Thm.\ 5.5]{amendola2020maximum}, the MLE of the toric fiber product is given by $\hat{p}^i_{j,k} = \frac{\left(\hat{p}_{\B}\right)^i_j\left(\hat{p}_{\C}\right)^i_k}{\left(\hat{p}_{\A}\right)^i}$.
From the proof of \cite[Lem.\ 5.10]{amendola2020maximum}, as a consequence of Birch's Theorem, it follows that $\left(\hat{p}_{\B}\right)^i_+ = \frac{u^i_{+,+}}{u^+_{+,+}} = \left(\hat{p}_{\A}\right)^i$, and analogously $\left(\hat{p}_{\C}\right)^i_+ = \left(\hat{p}_{\A}\right)^i$. Therefore, 
\[\sum_{j' \in |\B^i|} \beta^{i}_{j'}(\fatp) = \left(\hat{p}_{\B}\right)^i_+ = \sum_{k' \in |\C^i|} \beta^{i}_{k'}(\fatp) = \left(\hat{p}_{\C}\right)^i_+ = \left(\hat{p}_{\A}\right)^i\]
and the desired statement follows.  
\end{proof}

We are now ready to prove Theorem~\ref{thm}.
\begin{proof}
Having rational linear precision is equivalent to having ML degree one by Theorem \ref{thm:mlDegRationalPrecision}. Then the first statement is a direct consequence of the multiplicativity of the ML degree under toric fiber products \cite[Thm.\ 5.5] {amendola2020maximum}.

We first show that both expressions in (\ref{equ:blendingFct}) define rational parametrizations
\[\beta_1(\tea)=\left(\frac{\beta_{j}^{i}(\tea) \beta_{k}^{i}(\tea)}{\sum_{j' \in |\B^i|} \beta^{i}_{j'}(\tea)}\right)^{i\in|\A|}_{(j,k)\in|\B^i\times \C^i|}
\text{, } \beta_2(\tea) = \left(\frac{\beta_{j}^{i}(\tea) \beta_{k}^{i}(\tea)}{\sum_{k' \in |\C^i|} \beta^{i}_{k'}(\tea)}\right)^{i\in|\A|}_{(j,k)\in|\B^i\times \C^i|}\] 
of $X_{\tfp}$.
To do this, we first show that the products $\beta_j^i \beta_k^i$ parametrize $X_{\tfp}$ and the result then follows since $\beta_1$ and $\beta_2$ are equivalent to $\beta_j^i \beta_k^i$ under the torus action associated to the multigrading $\A$. 
Let $\phi: \cc^{|\B|} \times \cc^{|\C|} \to \cc^{|\B \times_\A C|}$ be the map given by
\[
\phi(\xx, \yy) = (x_j^i y_k^i)_{(j,k) \in |\B^i\times \C^i|}^{i \in |\A|}~.
\] 
Then the toric fiber product $X_{\tfp}$ is precisely given by $\phi(X_\B \times X_\C)$. Since the blending functions $\beta_j^i$ and $\beta_k^i$ parametrize $X_\B$ and $X_\C$, respectively, and $\beta_j^i \beta_k^i = \phi \circ (\beta_j^i, \beta_k^i)$, we immediately get that the $\beta_j^i \beta_k^i$ parametrize $X_{\tfp}$. Now observe that the multigrading $\A$ induces an action of the torus $T_{\A} = (\cc^*)^{|\A|}$ via
\[T_{\A} \times X_{\tfp}\rightarrow X_{\tfp},~ (t^1,\dots,t^{|\A|}).\left(x^i_{j,k}\right)^{i\in|\A|}_{(j,k)\in|\B^i\times \C^i|} = \left(t^ix^i_{j,k}\right)^{i\in|\A|}_{(j,k)\in|\B^i\times \C^i|}.\]
Define $\tau:\cc^{d_1}\to T_{\A}$ by
\[\tau = (\tau^1,\dots,\tau^{|\A|}),~\tau^i(\tea) = \left\{\begin{array}{ll}
   \left(\sum_{j \in |\B^i|}\beta^i_j(\tea) \right)^{-1} &  \text{ if } \sum_{j \in |\B^i|}\beta^i_j(\tea) \neq 0\\
   1  & \text{ else.}
\end{array}\right. \]
Note that $\tau(\tea) \in T_{\A}$ and $\tau(\xx).(\beta_j^i(\xx) \beta_k^i(\xx)_{j \in |\B^i|, k \in |\C^i|}^{i \in |\A|} = \beta_1(\xx)$ for all $\xx\in P\times_{\A}Q$, showing that $\beta_j^i(\xx) \beta_k^i(\xx)$ and $\beta_1(\xx)$ lie in the same $T_{\A}$-orbit. A similar argument shows the same for $\beta_2(\xx)$, thus both $\beta_1$ and $\beta_2$ parametrize $X_{\tfp}$.

We will now show the two expressions in Equation \ref{equ:blendingFct} are equal. Let us define a new $\tau:\cc^{d_1+d_2}\to T_{\A}$ by
\[\tau = (\tau^1,\dots,\tau^{|\A|}),~\tau^i(\tea) = \left\{\begin{array}{ll}
   \frac{\sum_{j \in |\B^i|}\beta^i_j(\tea)}{\sum_{k \in |\C^i|}\beta^i_k(\tea)}  &  \text{ if } \sum_{j \in |\B^i|}\beta^i_j(\tea) \neq 0 \neq \sum_{k \in |\C^i|}\beta^i_k(\tea)\\
   1  & \text{ else.}
\end{array}\right.\]
Clearly, $\tau(\tea)\in T_{\A}$; we claim that $\tau(\xx).\beta_1(\xx) = \beta_2(\xx)$ for $\xx\in P\times_{\A}Q$. First consider the case $\xx\in P^i\times Q^i$, with $P^i$ and $Q^i$ defined as in Lemma \ref{lem:subBlendingFctsSumTo1}. By the Orbit-Cone Correspondence applied to the $T_{\A}$-action, all coordinates in $\beta_1(\xx)$ and $\beta_2(\xx)$ vanish except for those graded by $\fata^i$. By Lemma \ref{lem:subBlendingFctsSumTo1}, $\sum_{j\in|\B^i|}\beta^i_j(\xx) = \sum_{k\in|\C^i|}\beta^i_k(\xx) = 1$, so in particular the claim holds. Now consider the case where $\xx\notin \bigcup_{i\in |\A|}P^i\times Q^i$. Then, again by the Orbit-Cone Correspondence applied to the $T_{\A}$-action, for each $i\in|\A|$ there exist $j\in|\B^i|$ and $k\in|\C^i|$ such that $\beta^i_j(\xx),\beta^i_k(\xx) \neq 0$. Thus, by definition, $\tau(\xx).\beta_1(\xx) = \beta_2(\xx)$. We conclude that for all $\xx\in P\times_{\A}Q$, $\beta_1(\xx)$ and $\beta_2(\xx)$ lie in the same $T_{\A}$-orbit. Equality of $\beta_1$ and $\beta_2$ then follows once there exists at least one point in each orbit where the two parametrizations agree. This is indeed the case: for the maximal orbit this is the point given in Lemma \ref{lem:fctsAgreeOnMLE}, for smaller orbits corresponding to faces of $P^i\times Q^i$ we can pick a point as in Lemma \ref{lem:subBlendingFctsSumTo1}. 

It now remains to show that the $\beta^i_{j,k}$ sum to one and that they satisfy linear precision. This follows from direct computation. Firstly, we have
\[\sum_{(i,j,k)\in|\tfp|}\beta^i_{j,k} = \sum_{i\in|\A|, k\in|\C^i|}\beta^i_k\sum_{j\in|\B^i|}\frac{\beta^i_j}{\sum_{j' \in |\B^i|} \beta^{i}_{j'}} = \sum_{i\in|\A|, k\in|\C^i|}\beta^i_k = 1.\]
Finally, we compute
\begin{align*}
& \sum_{(i,j,k)\in|\tfp|}\beta^i_{j,k}(\fatp)\m^i_{j,k} = \sum_{i\in|\A|, j\in|\B^i|}\beta^i_j(\fatp)\sum_{k\in|\C^i|} \frac{\beta^i_k(\fatp)}{\sum_{k' \in |\C^i|} \beta^{i}_{k'}(\fatp)} (\bb^i_j, 0 ) \\
& \hspace{2em} + \sum_{i\in|\A|, k\in|\C^i|}\beta^i_k(\fatp)\sum_{j\in|\B^i|} \frac{\beta^i_j(\fatp)}{\sum_{j' \in |\B^i|} \beta^{i}_{j'}(\fatp)} (0, \cea^i_k)\\
& \hspace{2em} = \left(\sum_{i\in|\A|, j\in|\B^i|}\beta^i_j(\fatp)\bb^i_j~,~ \sum_{i\in|\A|, k\in|\C^i|}\beta^i_k(\fatp)\cea^i_k\right) = \fatp.
\end{align*}
Therefore, the $\beta^i_{j,k}$ constitute blending functions with rational linear precision.
\end{proof}

\section{The Horn matrix of ML degree one toric fiber products}
\label{sec:Horn}
In this section we give an explicit description
of a Horn pair for the toric fiber product of two toric varieties with ML degree one.
This 
construction uses a Horn pair for each factor and for the $(\A-1)$-dimensional probability simplex. Throughout this section we use
notation and setup for the toric fiber product introduced in Section~\ref{sec:TFP}. 
First, we recall the definition of Horn matrix and Horn pair as for example in \cite{DMS21}. Next, in Example~\ref{ex:hornpairs}, we give Horn matrices for the $n
$-dimensional probability simplex, the unit square, and the trapezoid considered in Example~\ref{ex:squareTrapezoidTFP}. Given two vectors $u,v$ with the
same number of entries, we use $u^v$ to denote the product $\prod_i u_i^{v_i}$.

\begin{definition}
    A \emph{Horn matrix} is an $r\times d$ integer matrix with all column sums being zero. Given a Horn matrix $H$ with columns $h_1,h_2,\dots,h_d$ and a vector $\lambda\in\rr^d$, the \emph{Horn parametrization} $\varphi_{(H,\lambda)}\colon\rr^d\rightarrow\rr^d$ is the rational map defined by
    \[u\mapsto \lambda \star (Hu)^H =\left(\lambda_1(Hu)^{h_1},\lambda_2(Hu)^{h_2},\dots,\lambda_d(Hu)^{h_d}\right).\]
\end{definition}

\begin{definition}
The pair $(H,\lambda)$ is called a \emph{Horn pair} if 
\begin{compactenum}
    \item the coordinates of $\varphi_{(H,\lambda)}$ sum up to one, i.e.\ $\lambda_1(Hu)^{h_1}+\lambda_2(Hu)^{h_2}+\dots+\lambda_d(Hu)^{h_d} = 1$, and
    \item $\varphi_{(H,\lambda)}$ is defined for all positive vectors and maps these to positive vectors.
\end{compactenum}
\end{definition}

If $X$ is a statistical model with ML degree one, then, by the results of \cite{Huh14} and \cite{DMS21}, there exist a Horn pair $(H,\lambda)$ such that the MLE $\Phi$ of $X$ satisfies $\Phi=\varphi_{(H,\lambda)}$.
Thus if $X_{\B}$ and $X_{\C}$ have ML degree one, there exist Horn pairs $(H_{\B}, \lambda_{\B})$ and $(H_{\C}, \lambda_{\C})$ such that the maximum likelihood estimate can be expressed as a Horn parametrization, i.e.
\[
   \hat{p}_{\B} = \lambda_{\B} \star (H_{\B}u_{\B})^{H_{\B}} \text{ and } \hat{p}_{\C} = \lambda_{\C} \star (H_{\C}u_{\C})^{H_{\C}}
\]
for data vectors $u_{\B}$ and $u_{\C}$. It follows from \cite[Thm.\ 5.5]{amendola2020maximum} 
that the toric fiber product of the two models 
$X_{\B\times_{\A}\C}$ has again ML degree one and must therefore admit a Horn pair 
$(H_{\B\times_{\A}\C}, \lambda_{\B\times_{\A}\C})$. We will give an explicit description of 
$(H_{\B\times_{\A}\C}, \lambda_{\B\times_{\A}\C})$ in Proposition \ref{thm:HornFiber} below.\par 
To set up the notation, let 
\[u = \left(u^i_{j,k}\right)^{i\in|\A|}_{(j,k)\in |\B^i\times \C^i|}\]
denote a data vector. As before, we will reserve $i,j$ and $k$ for indices of $\A,\B^i$ and $\C^i$, respectively. We use ``$+$'' to denote summation over all possible values of the respective index, e.g.\ $u^i_{j,+} = \sum_{k\in |\C^i|} u^i_{j,k} = \left(u_{\B}\right)^i_j$. In a similar vein, we denote by 
\[p = \left(p^i_{j,k}\right)^{i\in|\A|}_{(j,k)\in |\B^i\times \C^i|}\]
a joint probability distribution for the model $X_{\B\times_{\A}\C}$.

In general, if a statistical model possesses a Horn pair, i.e.\ the Horn parametrization yields a parametrization of the model, the Horn pair is not unique. However, there exists a minimal Horn matrix to a model with ML degree one, see \cite{DMS21}.

\begin{example} \label{ex:hornpairs}
A Horn pair corresponding to the simplex $\Delta_{n}$ is given by letting the Horn matrix be the 
$(n+1)\times (n+1)$-identity matrix with an additional row of $(-1)$s at the bottom and with $\lambda$ being the vector of all $(-1)$s. For the one-dimensional simplex $\Delta_1$ we have
\[H=\begin{pmatrix}
    1 & 0\\
    0 & 1\\
    \minus 1 & \minus 1\\
\end{pmatrix},~\lambda=(-1,-1),~\Phi(u_1,u_2)=\lambda\star (Hu)^H=\left(\frac{u_1}{u_1+u_2},\frac{u_2}{u_1+u_2}\right).\]
For another illustration, consider the two models $X_{\B}$ and $X_{\C}$ defined by the polytopes $P=\conv(\B)$ and $Q=\conv(\C)$ from Example \ref{ex:tbfs}. Note that $X_{\B}$ is the well-known independence model of
two binary random variables, and $X_{\C}$ is a multinomial staged tree. For toric surfaces with ML degree one, the Horn pair can be directly read off from the lattice distance functions and the normal fan of the polytope, see \cite[Prop.\ 3.1]{familiesEliana}. Concretely, we have
\[H_{\B} = \begin{pNiceMatrix}[first-row]
    \bb^1_1 & \bb^1_2 & \bb^2_1 & \bb^2_2\\
    1 & 0 & 1 & 0\\
    0 & 1 & 0 & 1\\
    1 & 1 & 0 & 0\\
    0 & 0 & 1 & 1\\
    \minus 1 & \minus 1 & \minus 1 & \minus 1\\
    \minus 1 & \minus 1 & \minus 1 & \minus 1\\
\end{pNiceMatrix}, ~H_{\C} = \begin{pNiceMatrix}[first-row]
    \cea^1_1 & \cea^1_2 & \cea^1_3 & \cea^2_1 & \cea^2_2\\
    0 & 1 & 2 & 1 & 0\\
    0 & 0 & 0 & 1 & 1\\
    2 & 1 & 0 & 0 & 1\\
    1 & 1 & 1 & 0 & 0\\
    \minus 1 & \minus 1 & \minus 1 & \minus 1 & \minus 1\\
    \minus 2 & \minus 2 & \minus 2 & \minus 1 & \minus 1\\
\end{pNiceMatrix}\]
and $\lambda_{\B}=(1,1,1,1),~\lambda_{\C}=(-1,-2,-1,1,1)$; the columns of the Horn matrices are labelled by the vectors of $\B$ and $\C$, respectively.
\end{example}

\begin{proposition}
\label{thm:HornFiber}
Let $X_{\B}$ and $X_{\C}$ be toric varieties with ML degree one and correspondng 
Horn pairs $(H_{\B},\lambda_{\B})$ and $ (H_{\C},\lambda_{\C})$, respectively, where  $H_{\B} \in \zz^{r_1\times |\B|}, H_{\C}\in \zz^{r_2\times |\C|}$. Fix $H_{\A}$ to be the minimal Horn matrix
associated to the $(|\A|-1)$-dimensional probability simplex, so $H_{\A}\in \zz^{(|\A|+1)\times |\A|}$. Denote the columns of $H_{\B},H_{\C}$, and $H_{\A}$ by $h_{j}^i,h_{k}^i$, and $h^i,$ respectively. Then $(H_{\tfp},\lambda_{\tfp})$ is a Horn pair for the 
toric fiber product $X_{\tfp}$.
Here, the vector $\lambda_{\tfp}$ of coefficients is given by
\[
\lambda_{\B\times_{\A}\C} = 
\left(\lambda^i_{{j,k}}\right)^{i\in |\A|}_{(j,k)\in |\B^i\times \C^i|}
\text{ with } 
\lambda^i_{{j,k}} = -\lambda^i_{{j}}\lambda^i_{{k}} \text{ and }
\lambda_{\B}=(\lambda_j^i)_{j\in |\B^i|}^{i\in |A|}, \lambda_{\C}=
(\lambda_k^i)_{k\in |\C^i|}^{i\in |A|}, \]
and the Horn matrix $H_{\tfp}$ is given in block form by
\begin{align}
    H_{\B\times_{\A}\C} = 
    \left(
    H_{\B^1\times \C^1}\mid H_{\B^2\times \C^2}
    \mid \cdots \mid H_{\B^{|\A|}\times \C^{|\A|}}
    \right).
\end{align}
For each $i\in |\A|$, the column $h_{j,k}^i$,  of block  $H_{\B^i\times \C^i}$ 
is the vertical concatenation of $h_j^i,h_k^i,-h^i$. 
Explicitly, if $\rho=r_1+r_2+|\A|+1$ and
$\alpha \in [\rho]$, then the row $\alpha$ of $h_{j,k}^i$, denoted by $h_{j,k}^{\alpha,i}$,
is given by
\[h^{\alpha,i}_{{j,k}} = \left\{
\begin{array}{ll}
  h^{\alpha,i}_{{j}}   &  \text{ for } 1\leq \alpha\leq r_1\\
  h^{(\alpha-r_1),i}_{{k}}   &  \text{ for } r_1+1\leq \alpha\leq r_1+r_2\\
  -h^{(\alpha-r_1-r_2),i}   &  \text{ for } r_1+r_2+1\leq \alpha\leq \rho.\\
\end{array}
\right.\]
Where,  $h^{\alpha,i}_{{j}},h^{(\alpha-r_1),i}_{{k}}$, and $h^{(\alpha-r_1-r_2),i}$, are the entries 
$\alpha,\alpha-r_1$, and $\alpha-r_1-r_2$ of the columns $h_j^i,h_k^i,h^i$, respectively.
\end{proposition}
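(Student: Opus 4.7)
The plan is to compute $\varphi_{(H_{\tfp},\lambda_{\tfp})}$ explicitly and show it agrees with the closed-form MLE of the toric fiber product given in \cite[Thm.\ 5.5]{amendola2020maximum}, namely $\hat p^i_{j,k} = (\hat p_{\B})^i_j (\hat p_{\C})^i_k / (\hat p_{\A})^i$. Once this identification is done, both defining conditions of a Horn pair follow for free: the coordinates sum to one because $\hat p$ is a probability distribution, and positivity on positive data is a standard property of the MLE. Moreover, $H_{\tfp}$ is a Horn matrix essentially by construction, since each of its columns $h^i_{j,k}$ is the vertical concatenation of $h^i_j$, $h^i_k$, and $-h^i$, all of which individually sum to zero.

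The heart of the argument is a block analysis of $H_{\tfp} u$ for a data vector $u = (u^i_{j,k})$. Marginalizing $u$ over $k$, over $j$, and over both indices respectively produces $u_{\B}$, $u_{\C}$, and $u_{\A}$; so a direct computation shows
\[H_{\tfp} u = \bigl(H_{\B} u_{\B},\ H_{\C} u_{\C},\ -H_{\A} u_{\A}\bigr)^\top.\]
Raising this vector to the power $h^i_{j,k}$ and splitting over the three blocks, the bottom block contributes an overall scalar $(-1)^{-\sum_\beta h^{\beta,i}}$, which equals $1$ because $H_{\A}$ is itself a Horn matrix and so each of its columns has vanishing entry-sum. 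This gives the clean factorization
\[(H_{\tfp} u)^{h^i_{j,k}} = (H_{\B} u_{\B})^{h^i_j}\,(H_{\C} u_{\C})^{h^i_k}\,(H_{\A} u_{\A})^{-h^i}.\]

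Multiplying through by $\lambda^i_{j,k} = -\lambda^i_j\lambda^i_k$ and using $\lambda^i_{\A} = -1$ from Example~\ref{ex:hornpairs}, so that $(\hat p_{\A})^i = -(H_{\A} u_{\A})^{h^i}$ and hence $(H_{\A} u_{\A})^{-h^i} = -1/(\hat p_{\A})^i$, the Horn parametrizations of $X_{\B}$ and $X_{\C}$ slot in to yield
\[\varphi^i_{j,k}(u) \;=\; \frac{(\hat p_{\B})^i_j\,(\hat p_{\C})^i_k}{(\hat p_{\A})^i} \;=\; \hat p^i_{j,k},\]
as required.

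The main obstacle I expect is the sign bookkeeping: three independent sources of minus signs---the explicit $-1$ in $\lambda^i_{j,k} = -\lambda^i_j\lambda^i_k$, the entries $-1$ of $\lambda_{\A}$, and the factor $(-1)^{-\sum_\beta h^{\beta,i}}$ coming from the bottom block---must cancel simultaneously, and the cancellation relies precisely on $H_{\A}$ being a Horn matrix. Once this is correctly accounted for, the remainder of the proof is essentially mechanical, and the block-structured construction of $H_{\tfp}$ and $\lambda_{\tfp}$ is revealed as the exact combinatorial shadow of the multiplicative formula $\hat p^i_{j,k} = (\hat p_{\B})^i_j(\hat p_{\C})^i_k/(\hat p_{\A})^i$.
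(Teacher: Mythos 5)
Your proposal is correct and follows essentially the same route as the paper: reduce to verifying that $\varphi_{(H_{\tfp},\lambda_{\tfp})}$ reproduces the MLE formula $\hat p^i_{j,k}=(\hat p_{\B})^i_j(\hat p_{\C})^i_k/(\hat p_{\A})^i$ from \cite[Thm.\ 5.5]{amendola2020maximum}, then split the product over the three row blocks (the paper's $P_1,P_2,P_3$) using marginalization of $u$ and track the signs. The only cosmetic difference is that the paper evaluates the bottom block directly from the explicit entries of the minimal simplex Horn matrix, whereas you route it through the abstract Horn parametrization of $\Delta_{|\A|-1}$; the sign cancellation you flag is exactly the one the paper carries out.
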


\begin{proof}
It suffices to check that the pair $(H_{\B\times_{\A}\C}, \lambda_{\B\times_{\A}\C})$ gives rise to a Horn parametrization yielding the correct expression for the maximum likelihood estimate of $X_{\B\times_{\A}\C}$; then the pair will automatically be friendly and positive and thus a Horn pair for $X_{\B\times_{\A}\C}$, see \cite{DMS21}.\par 
By \cite[Thm.\ 5.5]{amendola2020maximum}, the MLE of $X_{\B\times_{\A}\C}$ is given by
$$\hat{p} = \left(\hat{p}^i_{j,k}\right)^{i\in |\A|}_{(j,k)\in |\B^i\times \C^i|} \quad\text{with}\quad \hat{p}^i_{j,k} = \frac{\hat{p}^i_{j} \hat{p}^i_{k}}{\hat{p}^i_{+,+}}.$$
The $(i,j,k)$th entry of the Horn parametrization computes as
\begin{align*}
    &\left(\lambda_{\B\times_{\A}\C}\star \left(H_{\B\times_{\A}\C} u\right)^{H_{\B\times_{\A}\C}}\right)^i_{j,k} = \lambda^i_{{j,k}} \prod_{\alpha =1}^{\rho} \left( \sum_{(\ti,\tj,\tk)\in |\B\times_{\A}\C|} h^{\alpha,\ti}_{{\tj,\tk}} u^{\ti}_{\tj,\tk}\right)^{h^{\alpha,i}_{{j,k}}} \stepcounter{equation}\tag{\theequation}\label{equ:HornUniformisation} \\
\end{align*}
Let us split the product above into three products $P_1,P_2$ and $P_3$ where $\alpha$ ranges over $\{1,\dots,r_1\}$, $\{r_1+1,\dots,r_1+r_2\}$ and $\{r_1+r_2+1,\dots, \rho\}$, respectively. Then we obtain
\begin{align*}
    P_1 & = \prod_{\alpha =1}^{r_1} \left( \sum_{(\ti,\tj,\tk) \in |\B\times_{\A}\C|} h^{\alpha,\ti}_{{\tj,\tk}} u^{\ti}_{\tj,\tk}\right)^{h^{\alpha,i}_{{\B\times_{\A}\C}_{j,k}}} = \prod_{\alpha =1}^{r_1} \left( \sum_{(\ti,\tj,\tk) \in |\B\times_{\A}\C|} h^{\alpha,\ti}_{{\tj}} u^{\ti}_{\tj,\tk}\right)^{h^{\alpha,i}_{{j}}}\\
    & = \prod_{\alpha =1}^{r_1} \left( \sum_{(\ti,\tj) \in |\B|} h^{\alpha,\ti}_{{\tj}} u^{\ti}_{{\tj},+}\right)^{h^{\alpha,i}_{{\B}_{j}}} = \frac{\hat{p}^i_{j,+}}{\lambda^i_{j}},
\end{align*}
and similarly $P_2 = \frac{\hat{p}^i_{+,k}}{\lambda^i_{k}}$.
Finally, we have
\begin{align*}
    P_3 & = \prod_{\alpha =r_1+r_2+1}^{\rho} \left( \sum_{(\ti,\tj,\tk) \in |\B\times_{\A}\C|} h^{\alpha,\ti}_{{\tj,\tk}} u^{\ti}_{\tj,\tk}\right)^{h^{\alpha,i}_{{j,k}}} 
    = \prod_{\alpha =r_1+r_2+1}^{\rho} \left( \sum_{\ti \in |\A|} -h^{(\alpha-r_1-r_2),\ti} u^{\ti}_{+,+}\right)^{\left(-h^{\alpha,i}\right)}\\
    &= \left(\prod_{\alpha = 1}^{|\A|} \left(-u^{\alpha}_{+,+}\right)^{\delta_{\alpha,i}}\right)^{(-1)}\cdot u^+_{+,+} = -\frac{u^+_{+,+}}{u^i_{+,+}}.
\end{align*}
As $\A$ is linearly independent, 
$\hat{p}^i_{+,+} = \frac{u^i_{+,+}}{u^+_{+,+}}.$
Combining this with the computations above, we obtain
\[(\ref{equ:HornUniformisation}) = -\lambda^i_{j}\lambda^i_{k} P_1P_2P_3 = \frac{\hat{p}^i_{j,+} \hat{p}^i_{+,k}}{\hat{p}^i_{+,+}}.\]\end{proof}

\begin{example}
The Horn pair for the toric fiber product $X_{\B}\times_{\A}X_{\C}$ from Proposition~\ref{thm:HornFiber}, where $X_{\B}$ and $X_{\C}$
are defined in  Example \ref{ex:hornpairs} and the multigrading is specified 
in Figure \ref{fig:squareTrapezoidTFP}, is given by 
\[H_{\tfp}=\begin{pNiceMatrix}[columns-width = .05cm, first-row]
\begin{array}{cc} \bb_1^1  \\ \cea_1^1 \end{array} & \begin{array}{cc} \bb_1^1  \\ \cea_2^1 \end{array} & \begin{array}{cc} \bb_1^1  \\ \cea_3^1 \end{array} & \begin{array}{cc} \bb_2^1  \\ \cea_1^1 \end{array} & \begin{array}{cc} \bb_2^1  \\ \cea_2^1 \end{array} & \begin{array}{cc} \bb_2^1  \\ \cea_3^1 \end{array} & \begin{array}{cc} \bb_1^2  \\ \cea_1^2 \end{array} & \begin{array}{cc} \bb_1^2  \\ \cea_2^2 \end{array} & \begin{array}{cc} \bb_2^2  \\ \cea_1^2 \end{array} & \begin{array}{cc} \bb_2^2  \\ \cea_2^2 \end{array} \\
 1  &  1  &  1  &  0  &  0  &  0  &  1  &  1  &  0  &  0  \\
 0  &  0  &  0  &  1  &  1  &  1  &  0  &  0  &  1  &  1  \\
 1  &  1  &  1  &  1  &  1  &  1  &  0  &  0  &  0  &  0  \\
 0  &  0  &  0  &  0  &  0  &  0  &  1  &  1  &  1  &  1  \\
 \minus 1 & \minus 1 & \minus 1 & \minus 1 & \minus 1 & \minus 1 & \minus 1 & \minus 1 & \minus 1 & \minus 1\\
 \minus 1 & \minus 1 & \minus 1 & \minus 1 & \minus 1 & \minus 1 & \minus 1 & \minus 1 & \minus 1 & \minus 1\\
 0 & 1 & 2 & 0 & 1 & 2 & 1 & 0 & 1 & 0\\
 0 & 0 & 0 & 0 & 0 & 0 & 1 & 1 & 1 & 1\\
 2 & 1 & 0 & 2 & 1 & 0 & 0 & 1 & 0 & 1\\
 1 & 1 & 1 & 1 & 1 & 1 & 0 & 0 & 0 & 0\\
 \minus 1 & \minus 1 & \minus 1 & \minus 1 & \minus 1 & \minus 1 & \minus 1 & \minus 1 & \minus 1 & \minus 1\\
 \minus 2 & \minus 2 & \minus 2 & \minus 2 & \minus 2 & \minus 2 & \minus 1 & \minus 1 & \minus 1 & \minus 1\\
 \minus 1 & \minus 1 & \minus 1 & \minus 1 & \minus 1 & \minus 1 & 0 & 0 & 0 & 0\\
 0 & 0 & 0 & 0 & 0 & 0 & \minus 1 & \minus 1 & \minus 1 & \minus 1\\
 1 & 1 & 1 & 1 & 1 & 1 & 1 & 1 & 1 & 1\\
\end{pNiceMatrix}\]
and $\lambda_{\tfp} = (1, 2, 1, 1, 2, 1, \minus 1, \minus 1, \minus 1, \minus 1)$. Note that in almost all instances, the Horn matrix as constructed in Proposition \ref{thm:HornFiber} will not be minimal, as is also the case in this example. However, it can be transformed into a minimal one via an efficient algorithm \cite[Lem.\ 3]{DMS21}.
\end{example}

{\bf Acknowledgements}. 
Eliana Duarte was supported by the FCT grant 2020.01933.CEECIND, and partially supported by CMUP under the FCT grant UIDB/00144/2020.

\bibliographystyle{splncs04}
\bibliography{bibliography}

\end{document}